\documentclass{amsart}
\usepackage{graphicx}
\usepackage{amsmath}
\usepackage{amssymb}
\usepackage{amsthm}
\usepackage{esint}

\newtheorem{theorem}{Theorem}[section]

\newtheorem{prop}[theorem]{Proposition}

\numberwithin{equation}{section}



\begin{document}

\title[Blow-up phenomena]{Blow-up phenomena for the constant scalar curvature and constant boundary mean curvature equation
(after Chen and Wu)}

\author{Pak Tung Ho}
\address{Department of Mathematics, Tamkang University, Tamsui, New Taipei City 251301, Taiwan}

\email{paktungho@yahoo.com.hk}

\author{Jinwoo Shin}
\address{Department of Mathematics \&  Research Institute of Natural Science, Sookmyung Women's University, Cheongpa-ro 47-gil 100, Youngsan-gu, Seoul, 04310, Korea}
\email{shinjin@soomkyung.ac.kr}

\subjclass[2020]{Primary 53C21, 35J20 ; Secondary 35B33 }

\date{29th of September, 2022.}

\keywords{Yamabe problem; compactness; manifolds with boundary}

\begin{abstract}
In this paper,
the compactness
of the solutions
to the constant scalar curvature
and constant boundary mean curvature equation
is considered.
Chen and Wu constructed
a smooth counterexample
showing that the compactness
of the set of ``lower energy" solutions
to the above equation fails
when the dimension of the manifold
is not less than 62.
We prove that
a smooth counterexample still exists
when the dimension of the manifold
is not less than 35.

\end{abstract}

\maketitle

\section{Introduction}

Given an $n$-dimensional closed (i.e. compact without boundary) Riemannian manifold
$(M,g)$ with $n\geq 3$, the Yamabe problem is to find a metric $\tilde{g}$ conformal to $g$
such that the scalar curvature $R_{\tilde{g}}$ of $\tilde{g}$ is constant.
If we write $\tilde{g}=u^{\frac{4}{n-2}}g$ with $0<u\in C^\infty(M)$,
then $\tilde{g}$ has constant scalar curvature $c$ if and only if
\begin{equation}\label{0.1}
\frac{4(n-1)}{n-2}\Delta_{g} u-R_{g} u+c u^{\frac{n+2}{n-2}}=0.
\end{equation}
The Yamabe problem is solved by Aubin \cite{Aubin}, Trudinger \cite{Trudinger},
and Schoen \cite{Schoen}.
In other words, there exists at least one solution to (\ref{0.1}).

Solutions to (\ref{0.1}) are usually not unique.
In \cite{Pollack}, Pollack has used gluing techniques to prove the following:
given any conformal class with positive Yamabe constant and any positive integer $N$, there exists a new conformal
class which is close to the original one in the $C^0$-norm and contains at least $N$
metrics of constant scalar curvature.

It is an interesting question whether the set of all solutions to (\ref{0.1}) is compact in the $C^2$-topology.
A conjecture due to Schoen states
that this should be true unless $(M,g)$ is conformally equivalent to
the standard sphere (see \cite{Schoen1,Schoen2,Schoen3}).
This conjecture has been verified in low dimensions \cite{Druet,Li&Zhu,Marques} and in the locally
conformally flat case \cite{Schoen2,Schoen3}.
In particular, Khuri, Marques and Schoen
\cite{Khuri&Marques&Schoen}
proved that the compactness conjecture is true up to dimension $24$, assuming that the positive mass theorem holds.

It turns out that the conjecture is false in higher dimension.
Brendle \cite{Brendle} constructed Riemannian manifolds $(M, g)$ such that the set of constant scalar curvature metrics in
the conformal class of $g$ is non-compact, when the dimension $n\geq 52$.
Modifying the arguments in \cite{Brendle}, Brendle and Marques \cite{Brendle&Marques}
were able to construct such Riemannian manifold when the dimension $25\leq n\leq 52$.

Now suppose that $(M,g)$ is an $n$-dimensional
compact Riemannian manifold with boundary $\partial M$.
The Yamabe problem can also be formulated on Riemannian manifolds with boundary.
And there are two types:\\
(I) Find $\tilde{g}$ conformal to $g$ such that the scalar curvature of $\tilde{g}$ is constant in $M$
and the mean curvature of $\tilde{g}$ is zero on $\partial M$.\\
(II) Find $\tilde{g}$ conformal to $g$ such that the scalar curvature of $\tilde{g}$ is zero in $M$
and the mean curvature of $\tilde{g}$ is constant on $\partial M$.\\
If we write $\tilde{g}=u^{\frac{4}{n-2}}g$ with $0<u\in C^\infty(M)$, then
\begin{equation}\label{0.2}
\begin{split}
-\frac{4(n-1)}{n-2}\Delta_{g}u+R_{g}u &=R_{\tilde{g}} u^{\frac{n+2}{n-2}}~~\mbox{ in }M,\\
\frac{2}{n-2}\frac{\partial u}{\partial\nu_{g}}+H_{g}u &=H_{\tilde{g}} u^{\frac{n}{n-2}}~~\mbox{ on }\partial M.
\end{split}
\end{equation}
Here $\displaystyle\frac{\partial}{\partial\nu_{g}}$ is the outward normal derivative with respect to $g$,
and $H_{g}$ (and $H_{\tilde{g}}$ respectively) is the mean curvature of $g$ (and $\tilde{g}$ respectively).
Therefore, the Yamabe problem with boundary (I) is equivalent to solving (\ref{0.2})
with $R_{\tilde{g}}\equiv c$ and $H_{\tilde{g}}\equiv 0$, i.e.
\begin{equation}\label{0.3}
\begin{split}
-\frac{4(n-1)}{n-2}\Delta_{g}u+R_{g}u&=c u^{\frac{n+2}{n-2}}~~\mbox{ in }M,\\
\frac{2}{n-2}\frac{\partial u}{\partial\nu_{g}}+H_{g}u &=0~~\mbox{ on }\partial M,
\end{split}
\end{equation}
and the Yamabe problem with boundary (II) is equivalent to solving (\ref{0.2})
with $R_{\tilde{g}}\equiv 0$ and $H_{\tilde{g}}\equiv c$, i.e.
\begin{equation}\label{0.4}
\begin{split}
-\frac{4(n-1)}{n-2}\Delta_{g}u+R_{g}u &=0~~\mbox{ in }M,\\
\frac{2}{n-2}\frac{\partial u}{\partial\nu_{g}}+H_{g}u &=c u^{\frac{n}{n-2}}~~\mbox{ on }\partial M.
\end{split}
\end{equation}
The Yamabe problem with boundary (I) and (II) have been studied intensively.
See \cite{Almaraz,Brendle&Chen,Cherrier,Escobar1,Escobar2,Mayer&Ndiaye} and the references therein.

Inspired by the compactness conjecture in the closed case, one can ask
if the set of all solutions to (\ref{0.3}) (and to (\ref{0.4}) respectively)
is compact in $C^2$-topology.
This was studied in \cite{Almaraz1,Almaraz,Disconzi&Khuri,Felli&Ould,Felli&Ould1,Ghimenti&Micheletti,Kim&Musso&Wei}

More generally, one can try to find a metric $\tilde{g}$ conformal to $g$
such that the scalar curvature of $\tilde{g}$ is equal to $a$ in $M$
and the mean curvature of $\tilde{g}$ is equal to $b$ on $\partial M$, for some constants $a$ and $b$.
This includes the Yamabe problem with boundary (I) and (II) as special cases.
In view of (\ref{0.2}), it is equivalent to solving the following:
\begin{equation}\label{0.5}
\begin{split}
-\Delta_{g}u+\frac{n-2}{4(n-1)}R_{g}u  &=c_1 u^{\frac{n+2}{n-2}}~~\mbox{ in }M,\\
\frac{\partial u}{\partial\nu_{g}}+\frac{n-2}{2}H_{g}u &=c u^{\frac{n}{n-2}}~~\mbox{ on }\partial M
\end{split}
\end{equation}
where $c_1=\displaystyle\frac{n-2}{4(n-1)}a$ and $c=\displaystyle\frac{n-2}{2}b$.
This problem of finding conformal metric of constant scalar curvature and constant boundary mean curvature
has been studied in \cite{Chen&Ho&Sun,Chen&Ruan&Sun,Chen&Sun,Han&Li1,Han&Li2}.

Similar to the Yamabe problem on closed manifolds
and the Yamabe problem with boundary (I) and (II),
it would be interesting to study the compactness and non-compactness of (\ref{0.5}).
Almaraz and Wang \cite{Almaraz&Wang}
obtained a compactness result of (\ref{0.5}) when the dimension $n=3$.
Chen and Wu \cite{Chen&Wu} were able to construct
Riemannian manifolds $(M,g)$ with boundary $\partial M$
such that the set of all solutions to (\ref{0.5}) is non-compact,
when the dimension $n\geq 62$.
In order to state their result, we need some notations.
We define the functional
\begin{equation}\label{0.6}
\begin{split}
I_{(M,g)}[u]
&=\int_M\left(\frac{4(n-1)}{n-2}|\nabla_g u|^2+R_gu^2\right)d\mu_g
+2(n-1)\int_{\partial M}H_g u^2 d\sigma_g\\
&\hspace{4mm}-\frac{4(n-1)}{n}c_1\int_M u_+^{\frac{2n}{n-2}}d\mu_g
-4c\int_{\partial M} u_+^{\frac{2(n-1)}{n-2}}d\sigma_g
\end{split}
\end{equation}
for any $u\in H^1(M,g)$,
where $u_+=\max\{u,0\}$.
Let $W$ be a single bubble in $(\mathbb{R}^n_+,|dx|^2)$, i.e.
$$W(x)=\left(\frac{n(n-2)}{c_1}\right)^{\frac{n-2}{4}}
(1+|x-T_c \mathbf{e}_n|^2)^{\frac{2-n}{2}}~~\mbox{ with }
T_c=-\frac{c}{n-2}\sqrt{\frac{n(n-2)}{c_1}},$$
and $S_c$ the energy of $W$, i.e.
\begin{equation*}
\begin{split}
S_c&=I_{(\mathbb{R}^n_+,|dx|^2)}[W]\\
&=\int_{\mathbb{R}^n_+}\frac{4(n-1)}{n-2}|\nabla W|^2dx
-\frac{4(n-1)}{n}c_1\int_{\mathbb{R}^n_+}W^{\frac{2n}{n-2}}dx
-4c\int_{\partial\mathbb{R}^n_+}W^{\frac{2(n-1)}{n-2}} d\sigma\\
&=\frac{4}{n-2}\int_{\mathbb{R}^n_+} |\nabla W|^2dx
+\frac{4c_1}{n-2}\int_{\mathbb{R}^n_+}W^{\frac{2n}{n-2}}dx>0.
\end{split}
\end{equation*}
The following theorem was proved in \cite[Theorem 1.1]{Chen&Wu}.

\begin{theorem}\label{thm1.1}
Let $c\in\mathbb{R}_+$ and $n\geq 62$,
there exists a metric $g$ on $\mathbb{S}^n_+$
such that the Yamabe constant with boundary
 $Y(\mathbb{S}^n_+,\partial \mathbb{S}^n_+, g)>0$,
 and a sequence of positive smooth functions $\{u_\nu\}_{\nu\in\mathbb{N}}$
 with the following properties:\\
 (i) $g$ is not locally conformally flat,\\
 (ii) $\partial\mathbb{S}^n_+$ is umbilic with respect to $g$,\\
 (iii) $u_\nu$ is a positive smooth solution to \eqref{0.5}
 with $c_1=n(n-2)$ and $M=\mathbb{S}^n_+$,\\
 (iv) $I_{(\mathbb{S}^n_+,g)}[u_\nu]<S_c$, and\\
 (v) $\sup_{\partial\mathbb{S}^n_+} u_{\nu}\to\infty$ as $\nu\to\infty$.
\end{theorem}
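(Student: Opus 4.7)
The plan is to adapt Brendle's non-compactness construction \cite{Brendle} to the boundary mixed problem \eqref{0.5} via a Lyapunov--Schmidt reduction around blow-up bubbles anchored at a boundary point. Fix $p_0\in\partial\mathbb{S}^n_+$ and Fermi coordinates $(x',x_n)$ in which $\{x_n=0\}$ corresponds to $\partial\mathbb{S}^n_+$. I would construct $g$ as an explicit compactly supported perturbation $g_{ij}=\delta_{ij}+h_{ij}$ of the round metric in these coordinates, with $h$ a symmetric trace-free tensor whose Taylor expansion at $p_0$ is chosen so that (a) $\partial\mathbb{S}^n_+$ is umbilic with respect to $g$, so that the boundary second fundamental form is a multiple of the induced metric; (b) the Weyl tensor of $g$ at $p_0$, together with a few boundary-adapted higher derivatives, is non-vanishing, forcing $g$ to be not locally conformally flat; and (c) $h$ is small enough that the boundary Yamabe constant $Y(\mathbb{S}^n_+,\partial\mathbb{S}^n_+,g)$ stays positive.

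The approximate solutions are transplants of the half-space bubble $W$ via Fermi coordinates,
\[
W_{\epsilon,\xi}(x)=\chi(x)\,\epsilon^{-(n-2)/2}\,W\!\left(\epsilon^{-1}(x-\xi)\right),
\]
parametrized by a concentration scale $\epsilon>0$ and a boundary anchor $\xi$ near $p_0$. The linearization $L_g$ of \eqref{0.5} around $W_{\epsilon,\xi}$ is Fredholm with approximate kernel $K_{\epsilon,\xi}$ spanned by the Jacobi fields $\partial_\epsilon W_{\epsilon,\xi}$ and $\partial_{\xi^\alpha} W_{\epsilon,\xi}$. Uniform invertibility of $L_g$ on $K_{\epsilon,\xi}^\perp$, together with sharp estimates on the mismatch $L_g W_{\epsilon,\xi}-f(W_{\epsilon,\xi})$ produced by $h$, yields a small correction $\phi_{\epsilon,\xi}$ such that $u_{\epsilon,\xi}:=W_{\epsilon,\xi}+\phi_{\epsilon,\xi}$ solves \eqref{0.5} projected onto $K_{\epsilon,\xi}^\perp$. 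The full problem then reduces to finding critical points of the reduced functional $F(\epsilon,\xi):=I_{(\mathbb{S}^n_+,g)}[u_{\epsilon,\xi}]$.

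Expanding $F$ in $\epsilon$ for $\xi$ close to $p_0$, one expects a decomposition
\[
F(\epsilon,\xi)=S_c+\mathcal A(\xi)\,\epsilon^{4}-\mathcal B(\xi)\,\epsilon^{n-2}+\mathcal E(\epsilon,\xi),
\]
where $\mathcal A(\xi)$ is a positive integrated Weyl-type quantity of $g$ at $\xi$, $\mathcal B(\xi)$ is a mass-like quantity coming from the boundary Green's function of the conformal Laplacian with the boundary Robin condition in \eqref{0.5}, and $\mathcal E$ collects the intermediate terms. By choosing $h$ so that $\mathcal A$ attains a strict local minimum at $p_0$ with a positive value, and by optimizing in $\epsilon$, a critical point $(\epsilon_\nu,\xi_\nu)\to (0,p_0)$ of $F$ produces a sequence $u_\nu$ of positive smooth solutions of \eqref{0.5} satisfying both $\sup_{\partial\mathbb{S}^n_+}u_\nu\to\infty$ and $F(\epsilon_\nu,\xi_\nu)<S_c$, which yields conclusions (iii)--(v).

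The main obstacle is to control the error $\mathcal E(\epsilon,\xi)$ so that the local Weyl term $\mathcal A(\xi)\epsilon^4$ genuinely dominates at the critical point. The intermediate contributions arise from Taylor coefficients of $h$ that the umbilic boundary condition forces to be non-zero, and from boundary contributions in the Pohozaev identity used to evaluate the energy integrals on the rescaled bubble. Cancelling these requires the polynomial order of $h$ to be large enough, which in turn requires the space of trace-free symmetric two-tensors compatible with the umbilic constraint to carry enough independent coefficients. This algebraic counting, coupled with the exponents forced by the Pohozaev boundary integrals, is precisely what dictates the threshold $n\geq 62$ in Chen and Wu's argument, and I expect this bookkeeping between polynomial degree, boundary umbilicity, and the Pohozaev remainder to be the technical heart of the proof.
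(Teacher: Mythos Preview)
Your high-level skeleton (perturb the metric by a compactly supported trace-free tensor near a boundary point, run a Lyapunov--Schmidt reduction around half-space bubbles, and find critical points of the reduced energy) is indeed the framework Chen--Wu use. However, the heart of your argument---the expansion
\[
F(\epsilon,\xi)=S_c+\mathcal A(\xi)\,\epsilon^{4}-\mathcal B(\xi)\,\epsilon^{n-2}+\mathcal E(\epsilon,\xi)
\]
with $\mathcal A>0$ a Weyl-type quantity competing against a positive mass-like term $\mathcal B$---is not what happens, and in fact cannot work as stated. If the leading correction $\mathcal A(\xi)\epsilon^4$ is positive, the nearby critical points have energy \emph{above} $S_c$, contradicting (iv). No Green's-function mass term appears in the construction at all. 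What Chen--Wu actually show is that, after including the second-order correction $z_{(\xi,\epsilon)}$ to the bubble, the leading-order reduced functional $\mathcal F(\xi,\epsilon)$ (homogeneous of a fixed degree in the scaling) has a strict local \emph{minimum} at an interior point $(0,1)$ with $\mathcal F(0,1)<0$. The existence of that negative local minimum is encoded in the four scalar conditions $I(1)>0$, $I'(1)=0$, $I''(1)<0$, $J(1)<0$ on an auxiliary polynomial $f$, and it is the verification of these sign conditions for a \emph{linear} $f(s)=-s+a_0$ that forces $n\ge 62$. Your explanation of the threshold in terms of ``enough independent coefficients compatible with umbilicity'' and ``Pohozaev remainders'' is therefore off the mark; raising the degree of $f$ lowers the dimension (this paper takes $d=6$ to reach $n\ge 35$), not the reverse.

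There is a second gap: for a \emph{fixed} metric $g$ you need an infinite sequence $u_\nu$ with $\sup u_\nu\to\infty$. A single perturbation $h$ as you describe yields one local minimum of $\mathcal F_g$, hence one solution. Chen--Wu obtain the sequence by building $h$ as a superposition $h=\sum_N h_N$ of rescaled copies supported in disjoint half-annuli at scales $\lambda_N\to 0$ (Proposition~\ref{Chen_prop6.1} gives one solution per scale, and Theorem~\ref{Chen_thm6.2} glues them). Your proposal does not account for this iterated gluing, and without it you cannot conclude~(v).
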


In this paper, by following and modifying the arguments of Chen and Wu in \cite{Chen&Wu},
we can lower the dimension further to $n\geq 35$.
The following is our main theorem.

\begin{theorem}\label{main}
Let $n\geq 35$.
There exists a constant $\overline{c}\in\mathbb{R}^+$ depending only on $n$,
such that for all $c\in \mathbb{R}^+$ with $c<\overline{c}$,
there exist a metric $g$ on $\mathbb{S}^n_+$
whose Yamabe constant with boundary
 $Y(\mathbb{S}^n_+,\partial \mathbb{S}^n_+, g)>0$,
 and a sequence of positive smooth functions $\{u_\nu\}_{\nu\in\mathbb{N}}$
 with  properties (i)-(v) in Theorem \ref{thm1.1}.
\end{theorem}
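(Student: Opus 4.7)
The plan is to adapt the Chen--Wu non-compactness construction of \cite{Chen&Wu} with a more refined choice of both the perturbation metric and the approximate solutions, parallel to the Brendle--Marques sharpening \cite{Brendle&Marques} of \cite{Brendle} in the closed case (which lowered the threshold from $52$ to $25$). First I would fix a point $p\in\partial\mathbb{S}^n_+$ and take $g$ to be a suitable perturbation of the round metric, supported in a small half-ball around $p$, built from a trace-free symmetric 2-tensor $h$ whose jet at $p$ is calibrated to cancel the intermediate-order energy terms that were left uncancelled in \cite{Chen&Wu}. The tensor $h$ should be chosen (a) with appropriate normal components so that $\partial\mathbb{S}^n_+$ remains umbilic with respect to $g$, (b) so that $W_g$ does not vanish at $p$, hence $g$ is not locally conformally flat, and (c) with sufficiently small $C^2$-norm so that the Yamabe constant with boundary stays positive.

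Next I would take approximate solutions $W_{\varepsilon,\xi}$ consisting of the standard bubble centered at a boundary point $\xi$ near $p$ with concentration scale $\varepsilon$, augmented by higher-order correction terms $\sum_{k\ge 2}\varepsilon^{k}v_{k}$ solving, on $\mathbb{R}^n_+$, the linearized problem against the sources generated by $h$ and by the boundary term with parameter $c$. A standard Lyapunov--Schmidt reduction then supplies a small remainder $\phi(\varepsilon,\xi)$, orthogonal to the tangent space of the bubble family, such that $W_{\varepsilon,\xi}+\phi(\varepsilon,\xi)$ solves \eqref{0.5} if and only if the reduced functional
$$F(\varepsilon,\xi):=I_{(\mathbb{S}^n_+, g)}\bigl[W_{\varepsilon,\xi}+\phi(\varepsilon,\xi)\bigr]$$
has a critical point.

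The core analytic task is the asymptotic expansion of $F$. Using the refined bubble, one should obtain an expansion of the form
$$F(\varepsilon,\xi)=S_c-A_n\,\Psi(p,\xi)\,\varepsilon^{\alpha_n}+R(\varepsilon,\xi),$$
with $A_n>0$, $\Psi$ a non-trivial algebraic quantity in the Taylor coefficients of $h$ at $p$ driven by the Weyl tensor, and an error $R$ genuinely of lower order precisely when $n\ge 35$. A local-minimum (or min-max) argument for $F$ then produces critical points $(\varepsilon_\nu,\xi_\nu)$ with $\varepsilon_\nu\to 0$, and the corresponding $u_\nu$ automatically satisfy properties (iii)--(v); (iv) follows because the leading correction is strictly negative.

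The hard part is precisely the energy expansion. One must track every energy contribution up to the order at which the negative leading term appears, controlling simultaneously (a) the Weyl-tensor terms and their interaction with the boundary under the umbilicity constraint, (b) the boundary mean-curvature-type error terms, which are proportional to $c$, and (c) the contribution of the higher corrections $v_k$. The new restriction $c<\overline c$ enters precisely to ensure that the $c$-dependent boundary errors stay dominated by the leading Weyl-driven term; in \cite{Chen&Wu} a cruder ansatz was used and no such restriction was imposed, but the weaker cancellations forced $n\ge 62$. Reaching $n=35$ requires taking the polynomial degree of $h$ and the number of corrections $v_k$ as large as algebraically possible while preserving boundary umbilicity, and verifying the resulting cancellations is the main computational burden of the argument.
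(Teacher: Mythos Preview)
Your proposal takes a genuinely different and far more elaborate route than the paper. The paper does \emph{not} introduce higher-order corrections $\sum_{k\ge 2}\varepsilon^k v_k$ in the Brendle--Marques style; it reuses Chen--Wu's ansatz, single correction $w_{(\xi,\epsilon)}$, and Lyapunov--Schmidt reduction verbatim. The sole modification is in the polynomial $f(s)$ entering the perturbation tensor $\overline{H}_{ab}(x)=f(|x'|^2)H_{ab}(x)$: Chen--Wu took $\deg f=1$, the paper takes $\deg f=6$ with explicit numerical coefficients ($a_1=-10$, $a_2=10^{-4}$, $a_3=-10^{-3}$, $a_4=1.84\times 10^{-1}$, $a_5=-2.65\times 10^{-2}$, $a_6=7.37\times 10^{-4}$). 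With these values the paper verifies \emph{by computer, at $T_c=0$}, the four scalar conditions $I(1)>0$, $I'(1)=0$, $I''(1)<0$, $J(1)<0$ for each $35\le n\le 62$; Chen--Wu had already shown these conditions force a strict local minimum of $\mathcal{F}(\xi,\epsilon)$ at $(0,1)$. Continuity of the constants $c_q(T_c)$ then propagates the inequalities to small $|T_c|$, and the remainder of Chen--Wu's machinery applies unchanged.

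In particular, your diagnosis of the restriction $c<\overline{c}$ is incorrect: it does not come from $c$-dependent boundary error terms competing with a Weyl-driven main term in an energy expansion. It arises purely because the numerical verification is done only at $T_c=0$ and then extended by continuity to a small neighborhood. Your Brendle--Marques-style program might in principle succeed (and conceivably without the smallness restriction on $c$, or to a dimension below $35$), but it would require a substantial new energy expansion that the paper entirely sidesteps; the paper's argument is essentially a one-parameter tweak of Chen--Wu plus a Mathematica script.
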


We would like to emphasize the main difference between Theorem \ref{thm1.1}
and Theorem \ref{main}.  Chen and Wu were able to prove
that Theorem \ref{thm1.1} holds for all $c\in\mathbb{R}^+$,
while we are only able to prove
that Theorem \ref{main} holds for all $c\in\mathbb{R}^+$ less than
a particular value $\overline{c}$.

In Section \ref{section2}, we will summarize the argument of Chen and Wu
used in the proof of Theorem \ref{thm1.1}.
In Section \ref{section3},
we will then point out the exact place we need to modify
the argument of Chen and Wu in order to prove Theorem \ref{main}.

\section{Summary of the proof of Theorem \ref{thm1.1}}\label{section2}

In this section, we provide a concise exposition of the principal arguments employed by Chen and Wu in the proof of Theorem \ref{thm1.1}. This overview serves to clarify the framework within which the subsequent modifications will be introduced.

From now on, we let $c_1=n(n-2)$
and $T_c=-c/(n-2)$, where $c$ is given in (\ref{0.5}).
Given a pair $(\xi,\epsilon)\in\mathbb{R}^{n-1}\times (0,\infty)$,
we define
$$u_{(\xi,\epsilon)}(x)
=\left(\frac{\epsilon}{\epsilon^2+(x_n-T_c\epsilon)^2
+|x'-\xi|^2}\right)^{\frac{n-2}{2}},$$
where $x=(x',x_n)\in\mathbb{R}^n_+$.
Define
\begin{equation*}
\begin{split}
u_{(\xi,\epsilon,i)} &
=\left(\frac{\epsilon}{\epsilon^2+(x_n-T_c\epsilon)^2
+|x'-\xi|^2}\right)^{\frac{n+2}{2}}
\frac{2\epsilon(x_i-\xi_i)}{\epsilon^2+(x_n-T_c\epsilon)^2
+|x'-\xi|^2},\\
\hat{u}_{(\xi,\epsilon,i)} &
=\left(\frac{\epsilon}{\epsilon^2+(x_n-T_c\epsilon)^2
+|x'-\xi|^2}\right)^{\frac{n}{2}}
\frac{2\epsilon(x_i-\xi_i)}{\epsilon^2+(x_n-T_c\epsilon)^2
+|x'-\xi|^2}
\end{split}
\end{equation*}
for $1\leq i\leq n-1$, and
\begin{equation*}
\begin{split}
u_{(\xi,\epsilon,n)} &
=\left(\frac{\epsilon}{\epsilon^2+(x_n-T_c\epsilon)^2
+|x'-\xi|^2}\right)^{\frac{n+2}{2}}
\frac{(1+T_c^2)\epsilon^2-x_n^2-|x'-\xi|^2}{\epsilon^2+(x_n-T_c\epsilon)^2
+|x'-\xi|^2},\\
\hat{u}_{(\xi,\epsilon,n)} &
=\left(\frac{\epsilon}{\epsilon^2+(x_n-T_c\epsilon)^2
+|x'-\xi|^2}\right)^{\frac{n}{2}}
\frac{(1+T_c^2)\epsilon^2-x_n^2-|x'-\xi|^2}{\epsilon^2+(x_n-T_c\epsilon)^2
+|x'-\xi|^2}.
\end{split}
\end{equation*}
Define
$$\mathcal{E}
=\left\{w\in L^{\frac{2n}{n-2}}(\mathbb{R}^n_+)
\cap  L^{\frac{2(n-1)}{n-2}}(\partial\mathbb{R}^n_+)
\cap H^1_{loc}(\mathbb{R}^n_+)
:\int_{\mathbb{R}^n_+}|\nabla w|^2<\infty\right\}$$
and
$$\mathcal{E}_{(\xi,\epsilon)}
=\left\{w\in\mathcal{E}
:2n\int_{\mathbb{R}^n_+}w u_{(\xi,\epsilon,a)}
-T_c\int_{\partial \mathbb{R}^n_+}
w \hat{u}_{(\xi,\epsilon,a)}=0
\mbox{ for all }1\leq a\leq n\right\}.$$
We define a norm on $\mathcal{E}$ by
$\|w\|_{\mathcal{E}}
=\displaystyle\left(\int_{\mathbb{R}^n_+}|\nabla w|^2\right)^{\frac{1}{2}}$
Clearly, $u_{(\xi,\epsilon)}\in\mathcal{E}_{(\xi,\epsilon)}$.

We introduce a multi-linear form $\overline{W}:\mathbb{R}^{n-1}
\times \mathbb{R}^{n-1}\times \mathbb{R}^{n-1}\times \mathbb{R}^{n-1}
\to\mathbb{R}$
satisfying the same algebraic properties of the Weyl tensor on $\partial\mathbb{R}^n_+$.
Moreover, we assume
$$\sum_{i,j,k,l=1}^{n-1}(\overline{W}_{ikjl}+\overline{W}_{iljk})^2>0.$$
If $x=(x',x_n)\in\mathbb{R}^n_+$, we identify $x'$ with
$(x',0)\in\partial\mathbb{R}^n_+$ and define
\begin{equation}\label{4.1}
 H_{ij}(x)=H_{ij}(x')=\overline{W}_{ikjl}x_kx_l~~\mbox{ and }~~
H_{na}(x)=0,
\end{equation}
as well as
$\overline{H}_{ab}(x)=f(|x'|^2)H_{ab}(x)$,
where $f(s)$ is a polynomial of degree
$d$ for $0\leq d<(n-6)/4$
and to be determined.

We then define
\begin{equation*}
\begin{split}
 \mathcal{F}(\xi,\epsilon) &
=\frac{1}{2}\int_{\mathbb{R}^n_+}\sum_{i,j,l=1}^{n-1}
\overline{H}_{il}\overline{H}_{jl}
\partial_i u_{(\xi,\epsilon)}\partial_j u_{(\xi,\epsilon)}
-\frac{n-2}{16(n-1)}
\int_{\mathbb{R}^n_+}\sum_{i,j,l=1}^{n-1}\sum_{i,j,l=1}^{n-1}
(\partial_l \overline{H}_{ij})^2 u_{(\xi,\epsilon)}^2\\
&\hspace{4mm}+
\int_{\mathbb{R}^n_+}\sum_{i,j=1}^{n-1}
\overline{H}_{ij}\partial_i\partial_j u_{(\xi,\epsilon)} z_{(\xi,\epsilon)}
\end{split}
\end{equation*}
for $(\xi,\epsilon)\in\mathbb{R}^n_+\times (0,\infty)$,
where
$z_{(\xi,\epsilon)}(x)=\mu^{-1}w_{(\xi,\epsilon)}(x)$
satisfying
\begin{equation*}
\begin{split}
&\int_{\mathbb{R}^n_+}\left(\langle\nabla z_{(\xi,\epsilon)},\nabla\varphi\rangle
-n(n+2)u_{(\xi,\epsilon)}^{\frac{4}{n-2}}z_{(\xi,\epsilon)}\varphi\right)
+nT_c\int_{\partial\mathbb{R}^n_+}u_{(\xi,\epsilon)}^{\frac{2}{n-2}}
w_{(\xi,\epsilon)}\varphi\\
&=-\int_{\mathbb{R}^n_+}\mu\lambda^{2d}f(\lambda^{-2}|x'|^2)H_{ij}(x)\partial_i\partial_j
u_{(\xi,\epsilon)}\varphi
\end{split}
\end{equation*}
for all test function $\varphi\in\mathcal{E}_{(\xi,\epsilon)}$.

If $f(s)=\displaystyle\sum_{i=0}^d a_i s^i$,
$a_i\in\mathbb{R}$ for $1\leq i\leq d$,
then $\alpha_q$ are constants defined as
\begin{equation}\label{5.3}
\sum_{q=0}^{2d}\alpha_qs^q:=(n+1)f(s)^2+4sf(s)f'(s)+2s^2f'(s)^2.
\end{equation}
Also, we define
\begin{equation}\label{5.4}
c_q=\int_0^\infty(1+(t-T_c)^2)^{\frac{5+2q-n}{2}}dt,~~q\in\mathbb{N}\mbox{ and }0\leq q\leq 2d.
\end{equation}
Then we have
\begin{equation}\label{5.6}
I(s)=\sum_{q=0}^{2d}\left(c_q\alpha_q s^{q+2}\prod_{j=0}^q\frac{n-1+2j}{n-5-2j}\right).
\end{equation}
Let
\begin{equation}\label{5.7}
2f(s)f'(s)+sf'(s):=\sum_{q=0}^{2d-1}\beta_q s^q.
\end{equation}
Then
\begin{equation}\label{5.8}
J(s)=\sum_{q=0}^{2q-1}\left(c_q\beta_q s^{q+2}\prod_{j=0}^q\frac{n+3+2j}{n-5-2j}\right).
\end{equation}
In order to show that $\mathcal{F}(\xi,\epsilon)$
has a strict local minimum at $(0,1)$,
Chen and Wu showed that
it suffices to
show that
$I(1)>0$, $I'(1)=0$, $I''(1)<0$ and $J(1)<0$.
By choosing $d=1$, i.e. $f$ is a polynomial of degree $1$,
Chen and Wu was able to show this when $n\geq 62$.

\begin{prop}[Proposition 5.9 in \cite{Chen&Wu}]
\label{Chen_prop5.9}
Let $n\geq 62$. There exists a polynomial
$f(s)=-s+a_0$ such that
$I(1)>0$, $I'(1)=0$, $I''(1)<0$ and $J(1)<0$.
This implies that $\mathcal{F}(\xi,\epsilon)$
has a strict local minimum at the point $(0,1)$.
\end{prop}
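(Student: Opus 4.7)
The plan is to exploit the single free parameter $a_0$ in the ansatz $f(s) = -s + a_0$: use one degree of freedom to force $I'(1) = 0$, then verify that the three remaining sign conditions reduce to explicit inequalities in $n$ which hold precisely when $n \geq 62$.

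First, I would substitute $f(s) = a_0 - s$ (so $f'(s) = -1$) directly into the polynomial identities \eqref{5.3} and \eqref{5.7}. A straightforward expansion yields
\begin{equation*}
\alpha_0 = (n+1)\, a_0^2, \qquad \alpha_1 = -(2n+6)\, a_0, \qquad \alpha_2 = n+7,
\end{equation*}
and $\beta_0 = -2 a_0$, $\beta_1 = 1$. Plugging these into \eqref{5.6} and \eqref{5.8}, the functions $I$ and $J$ become explicit polynomials
$I(s) = A_0 s^2 + A_1 s^3 + A_2 s^4$ and $J(s) = B_0 s^2 + B_1 s^3$,
where each coefficient is the product of the corresponding $c_q$, the relevant $\alpha_q$ or $\beta_q$, and a rational factor built from the Pochhammer-like ratios $\prod_{j=0}^q \frac{n-1+2j}{n-5-2j}$ or $\prod_{j=0}^q \frac{n+3+2j}{n-5-2j}$.

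Second, the condition $I'(1) = 2 A_0 + 3 A_1 + 4 A_2 = 0$ is a quadratic in $a_0$, since $A_0 \propto a_0^2$, $A_1 \propto a_0$ and $A_2$ is independent of $a_0$, with positive leading coefficient and positive constant term; I would select the positive root producing $A_1 < 0$. Using the relation $I'(1) = 0$ to eliminate $A_0$ converts the remaining conditions into the compact form
\begin{equation*}
I(1) = -\tfrac{1}{2}\bigl(A_1 + 2 A_2\bigr), \qquad I''(1) = 3 A_1 + 8 A_2,
\end{equation*}
so that $I(1) > 0$ and $I''(1) < 0$ both become lower bounds on $|A_1|/A_2$, the latter being strictly stronger.

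Third, after inserting the explicit quadratic-formula expression for $a_0$, each of the inequalities $I(1) > 0$, $I''(1) < 0$, $J(1) = B_0 + B_1 < 0$ reduces, upon clearing positive common factors, to an inequality involving a rational function of $n$ together with the integrals $c_0, c_1, c_2$ (through which the parameter $c$ enters only via $T_c = -c/(n-2)$). I would then verify that these three inequalities hold simultaneously for all $n \geq 62$ and every $c > 0$. The principal obstacle lies precisely in this last step: the same $a_0$ must render all three inequalities compatible, while the ratios $c_q/c_0$ themselves depend on $c$, so one must either extract uniform monotonicity in $T_c$ or isolate the dimensional threshold purely from the $n$-dependent factors. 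Once the four sign conditions are in place, the fact that they imply a strict local minimum of $\mathcal{F}(\xi,\epsilon)$ at $(0,1)$ is the general criterion Chen and Wu established earlier in Section~5 of \cite{Chen&Wu}.
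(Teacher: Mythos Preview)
Your algebraic setup is correct: the values of $\alpha_0,\alpha_1,\alpha_2$ and $\beta_0,\beta_1$ are right, and the reductions $I(1)=-\tfrac{1}{2}(A_1+2A_2)$ and $I''(1)=3A_1+8A_2$ after imposing $I'(1)=0$ are exactly the simplifications one wants. Note, however, that this paper does \emph{not} prove Proposition~\ref{Chen_prop5.9}; it is quoted verbatim from \cite{Chen&Wu} and used as input. So there is no proof here to compare against line by line.

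What the paper \emph{does} prove is the analogue Proposition~\ref{main_prop} (degree~$6$, $n\geq 35$), and it is worth contrasting your plan with that argument. The paper's strategy is to fix numerical values of $a_1,\dots,a_6$, view $I'(1)$ as a quadratic in $a_0$, and then---rather than attempting the uniform-in-$T_c$ verification you flag as the ``principal obstacle''---evaluate everything at $T_c=0$, where the integrals $c_q$ collapse to explicit gamma-function ratios \eqref{cq0}. Positivity of the discriminant and the three sign conditions are then checked numerically at $T_c=0$ with \texttt{Mathematica}, and continuity in $T_c$ gives the result for $T_c$ close to $0$. This is precisely why Theorem~\ref{main} only covers small $c$, whereas Theorem~\ref{thm1.1} holds for all $c>0$: Chen and Wu, in the degree-$1$ case you are sketching, must carry out the uniform verification you describe, and the paper does not reproduce that step. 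Your approach is therefore the right one for the stated proposition, and it is genuinely different from (and stronger than) the continuity trick this paper employs; but your sketch stops short at the same place the paper does, namely the actual inequality-checking across all $T_c<0$, which is the substantive content of Chen--Wu's Proposition~5.9.
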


Using Proposition \ref{Chen_prop5.9}, Chen
and Wu proved the following:

\begin{prop}[Proposition 6.1 in \cite{Chen&Wu}]\label{Chen_prop6.1}
For $T_c\in\mathbb{R}_-$ and $n\geq n_0$, let $g=\exp(h)$ be
a smooth Riemannian metric in $\overline{\mathbb{R}^n_+}$,
where $h$ is a symmetric trace-free two tensor in $\overline{\mathbb{R}^n_+}$
satisfying
$$\left\{
  \begin{array}{ll}
    h_{ij}(x)=\mu\lambda^2 f(\lambda^{-2}|x'|^2)H_{ij}(x), & \hbox{in $B_\rho^+$;} \\
    h_{ab}(x)=0, & \hbox{in $\overline{\mathbb{R}^n_+}\setminus B_\rho^+$;} \\
    h_{na}(x)=0, & \hbox{in $\overline{\mathbb{R}^n_+}$,}
  \end{array}
\right.$$
where $0<\mu\leq 1$, $0<\lambda\leq\rho\leq 1$,
$1\leq i\leq n-1$, $1\leq a, b\leq n$ and $H_{ab}$ is defined
as in \eqref{4.1}. Assume that
$|h(x)|+|\partial h(x)|+|\partial^2 h(x)|\leq \alpha$
for all $x\in \overline{\mathbb{R}^n_+}$.
If $\alpha$ and $\mu^{-2}\lambda^{n-10}\rho^{2-n}$
are sufficiently small, then there exists a positive
smooth solution of
\begin{equation}\label{ChenWu_eq6.1}
\left\{
  \begin{array}{ll}
    \displaystyle\Delta_g v-\frac{n-1}{4(n-2)}R_g v+n(n-2)v^{\frac{n+2}{n-2}}=0, & \hbox{in $\mathbb{R}^n_+$;} \\
    \displaystyle\frac{\partial v}{\partial x_n}=(n-2)T_c v^{\frac{n}{n-2}}, & \hbox{on $\partial\mathbb{R}^n_+$.}
  \end{array}
\right.
\end{equation}
Moreover, there exists $C=C(n,T_c)>0$ such that
$$\sup_{B_\lambda^+(0)} v\geq C\lambda^{\frac{2-n}{2}}$$
and
\begin{equation*}
\begin{split}
&2(n-2)\int_{\mathbb{R}^n_+}v^{\frac{2n}{n-2}}-\frac{n-2}{n-1}T_c\int_{\partial\mathbb{R}^n_+}v^{\frac{2(n-1)}{n-2}}\\
&<2(n-2)\int_{\mathbb{R}^n_+}u_{(0,1)}^{\frac{2n}{n-2}}-\frac{n-2}{n-1}T_c\int_{\partial\mathbb{R}^n_+}u_{(0,1)}^{\frac{2(n-1)}{n-2}}.
\end{split}
\end{equation*}
\end{prop}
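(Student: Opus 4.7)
The plan is a Lyapunov--Schmidt reduction that uses Proposition \ref{Chen_prop5.9} to manufacture a genuine solution from the approximate critical point of $\mathcal{F}$. I would look for solutions of \eqref{ChenWu_eq6.1} of the form $v = u_{(\xi,\epsilon)} + w$, where $u_{(\xi,\epsilon)}$ is the standard half-space bubble, $w \in \mathcal{E}_{(\xi,\epsilon)}$ is a small correction orthogonal to the $n$-dimensional kernel of the linearized problem, and $(\xi,\epsilon)$ lies in a small neighborhood of $(0,1)$ after rescaling by $\lambda$ so that the concentration scale matches that of the perturbation $h$.

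First, for each fixed $(\xi,\epsilon)$ I would solve the projected equation for $w$. Linearization at $u_{(\xi,\epsilon)}$ turns \eqref{ChenWu_eq6.1} into a schematic equation $L_{(\xi,\epsilon)} w = N(w) + E_{(\xi,\epsilon)}$ on $\mathcal{E}_{(\xi,\epsilon)}$, where $L_{(\xi,\epsilon)}$ carries both the interior and the boundary parts of the linearization, $N(w)$ collects the nonlinearities, and $E_{(\xi,\epsilon)}$ is the error produced by inserting the flat bubble into the perturbed metric $g=\exp(h)$. Since the kernel of $L_{(\xi,\epsilon)}$ on the full space is exactly $\mathrm{span}\{u_{(\xi,\epsilon,a)}\}$, which has been modded out by $\mathcal{E}_{(\xi,\epsilon)}$, the restriction of $L_{(\xi,\epsilon)}$ is invertible with uniformly bounded inverse, and a contraction-mapping argument in $\|\cdot\|_\mathcal{E}$ produces a unique small $w_{(\xi,\epsilon)}$. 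The smallness hypothesis on $\alpha$ and $\mu^{-2}\lambda^{n-10}\rho^{2-n}$ is exactly what is needed so that $\|E_{(\xi,\epsilon)}\|$ in the relevant dual norm is small enough for the contraction to close.

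Next I would define the reduced functional $\mathcal{J}(\xi,\epsilon) := I_{(\mathbb{R}^n_+,g)}[u_{(\xi,\epsilon)} + w_{(\xi,\epsilon)}]$; critical points of $\mathcal{J}$ correspond to actual solutions of \eqref{ChenWu_eq6.1}, by the standard variational consequence of the orthogonality conditions defining $\mathcal{E}_{(\xi,\epsilon)}$. The crux is the energy expansion
$$\mathcal{J}(\xi,\epsilon) = S_c + \mu^2 \lambda^{n-6}\, \mathcal{F}(\xi,\epsilon) + o(\mu^2 \lambda^{n-6}),$$
uniformly on a small ball $B$ around $(0,1)$. Proposition \ref{Chen_prop5.9} gives a strict local minimum of $\mathcal{F}$ at $(0,1)$, so on a small enough $B$ one has $\mathcal{F}|_{\partial B} > \mathcal{F}(0,1) + \delta$ for some $\delta>0$; under the smallness hypotheses the remainder in the expansion is absorbed, and hence $\mathcal{J}$ attains its minimum on $B$ at an interior point $(\xi_*,\epsilon_*)$, which is the sought critical point. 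Smoothness of $v = u_{(\xi_*,\epsilon_*)} + w_{(\xi_*,\epsilon_*)}$ then follows from elliptic regularity up to the boundary, positivity from $\|w\|_{L^\infty}$ being much smaller than $u$ on the concentration region together with a barrier argument on the exterior, and the lower bound $\sup_{B_\lambda^+(0)} v \geq C\lambda^{(2-n)/2}$ from direct evaluation of $u_{(\xi_*,\epsilon_*)}$. The strict energy inequality in the statement reduces, via the same expansion, to verifying $\mathcal{F}(0,1) < 0$, which is in turn encoded by the sign conditions on $I(1)$, $I''(1)$, $J(1)$ through the formulas \eqref{5.6}--\eqref{5.8}.

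The principal obstacle I foresee is the uniform remainder estimate in this energy expansion. Every error term -- the decay of $w_{(\xi,\epsilon)}$ against the bubble, the cutoff of $h$ at scale $\rho$, the boundary nonlinearity $v^{n/(n-2)}$, and the curvature contributions hidden in $R_g$ -- must be traced through under dimensional constraints compatible with the convergence of the integrals $c_q$ in \eqref{5.4}. This is precisely where the dimensional threshold enters, and it is also where any sharpening of the threshold (as in Theorem \ref{main}) must be performed.
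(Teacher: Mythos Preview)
Your outline is the same Lyapunov--Schmidt reduction that the paper (sketching Chen--Wu) carries out: solve the projected equation on $\mathcal{E}_{(\xi,\epsilon)}$, form the reduced functional $\mathcal{F}_g$, expand it against $\mathcal{F}$, and use the strict local minimum of $\mathcal{F}$ at $(0,1)$ from Proposition~\ref{Chen_prop5.9} to produce an interior critical point, from which the energy inequality and the sup bound follow directly. Two small corrections are worth flagging: after the rescaling $(\xi,\epsilon)\mapsto(\lambda\xi,\lambda\epsilon)$ the leading order of the reduced energy is $\mu^{2}\lambda^{4d+4}\,\mathcal{F}(\xi,\epsilon)$ (so $\mu^{2}\lambda^{8}$ when $d=1$), not $\mu^{2}\lambda^{n-6}$; and the dimensional cutoff $n\ge n_0$ is governed by Proposition~\ref{Chen_prop5.9} rather than by the remainder estimates---the latter only need $\mu^{-2}\lambda^{n-10}\rho^{2-n}$ small, whereas the sharp threshold comes from whether a polynomial $f$ with the required sign pattern on $I(1),I'(1),I''(1),J(1)$ exists.
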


\begin{proof}[\textbf{Sketch of the Proof}]
  For the sake of completeness, we provide the proof here. However, this is only a sketch, and for the full proof, the reader is referred to \cite{Chen&Wu}. 
  
  \textbf{Step 1.} If $h$ satisfies the conditions of Proposition \ref{Chen_prop6.1} and $g=\exp(h)$, then we can establish the following: Given any pair $(\xi,\epsilon)\in\mathbb{R}^{n-1}\times(0,\infty)$, there exists a unique function $v_{(\xi,\epsilon)}\in\mathcal{E}$ such that $v_{(\xi,\epsilon)}-u_{(\xi,\epsilon)}\in \mathcal{E}_{(\xi,\epsilon)}$, and 
  \begin{equation*}
    \begin{split}
      \int_{\mathbb{R}^n_+}&\left( \langle \nabla v_{(\xi,\epsilon)},\nabla \varphi\rangle_g,c_nR_gv_{(\xi,\epsilon)}\varphi-n(n-2)|v_{(\xi,\epsilon)}|^\frac{4}{n-2}v_{(\xi,\epsilon)}\varphi\right)\\
      +&\int_{\partial\mathbb{R}^n_+}\left(d_nh_gv_{(\xi,\epsilon)}+(n-2)T_c|v_{(\xi,\epsilon)}^\frac{2}{n-2}v_{(\xi,\epsilon)}\right)\varphi=0    \end{split}
  \end{equation*}
  for all $\varphi\in\mathcal{E}_{(\xi,\epsilon)}$. Moreover, there exists a positive constant $C$, depending only on $T_c$ and $n$ such that 
  \begin{equation}\label{ChenWu_eq3.11}
    \begin{split}
      \|&v_{(\xi,\epsilon)}-u_{(\xi,\epsilon)}\|_{\mathcal{E}}\\
      &\leq C\|\Delta_gu_{(\xi,\epsilon)}-c_nR_gu_{(\xi,\epsilon)}+n(n-2)u_{(\xi,\epsilon)}^\frac{n+2}{n-2}\|_{L^\frac{2n}{n+2}(\mathbb{R}^n_+)}+C\|h_gu_{(\xi,\epsilon)}\|_{L^\frac{2(n-1)}{n}(\partial \mathbb{R}^n_+)}.
    \end{split}
  \end{equation}
  
  \textbf{Step 2.} Given a pair $(\xi,\epsilon)\in\mathbb{R}^{n-1}\times(0,\infty)$, we define the following energy functional
  \begin{equation*}
    \begin{split}
      &\mathcal{F}_g(\xi,\epsilon)\\
      :=&\int_{\mathbb{R}^n_+}\left(|\nabla v_{(\xi,\epsilon)}|^2_g+c_nR_gv_{(\xi,\epsilon)}^2-(n-2)^2|v_{(\xi,\epsilon)}^\frac{2n}{n-2}\right)+d_n\int_{\partial\mathbb{R}^n_+}h_gv_{(\xi,\epsilon)}^2\\
      &+\frac{(n-2)^2}{n-1}T_c\int_{\partial\mathbb{R}^n_+}|v_{(\xi,\epsilon)}|^\frac{2(n-1)}{n-2}-2(n-2)\int_{\mathbb{R}^n_+}u^\frac{2n}{n-2}_{(\xi,\epsilon)}+\frac{n-2}{n-1}T_c\int_{\partial \mathbb{R}^n_+}u^\frac{2(n-1)}{n-2}_{(\xi,\epsilon)}. 
    \end{split}
  \end{equation*}
  Then the functional $\mathcal{F}_g$ is continuously differentiable. Moreover, if $(\xi,\epsilon)$ is a critical point of $\mathcal{F}_g$, then the function $v_{(\xi,\epsilon)}$ is a positive smooth solution of 
  \begin{equation*}
    \begin{split}
      \left\{
        \begin{array}{ll}
          -\Delta_gv_{(\xi,\epsilon)}+c_nR_gv_{(\xi,\epsilon)}=n(n-2)v_{(\xi,\epsilon)}^\frac{n+2}{n-2}, & \hbox{in $\mathbb{R}^n_+$,} \\
          \frac{\partial v_{(\xi,\epsilon)}}{\partial x_n}-d_n h_gv_{(\xi,\epsilon)}=(n-2)T_cv_{(\xi,\epsilon)}^\frac{n}{n-2}, & \hbox{on $\partial \mathbb{R}^n_+$.}
        \end{array}
      \right.
    \end{split}
  \end{equation*}
  
\textbf{Step 3.} Applying Step 1 to each pair $(\xi,\epsilon)\in\mathbb{R}^{n-1}\times(0,\infty)$, we choose $v_{(\xi,\epsilon)}$ to be the unique element of $\mathcal{E}$ such that $v_{(\xi,\epsilon)}-u_{(\xi,\epsilon)}\in\mathcal{E}_{(\xi,\epsilon)}$ and 
\begin{equation*}
  \begin{split}
 & \int_{\mathbb{R}^n_+}\left(\langle \nabla v_{(\xi,\epsilon)},\nabla \varphi\rangle_g+c_nR_gv_{(\xi,\epsilon)}\varphi-n(n-2)|v_{(\xi,\epsilon)}|^\frac{4}{n-2}v_{(\xi,\epsilon)}\varphi\right)\\
&+(n-2)T_c\int_{\partial \mathbb{R}^n_+}|v_{(\xi,\epsilon)}|^\frac{2}{n-2}v_{(\xi,\epsilon)}\varphi=0
\end{split}
\end{equation*}
  for all $\varphi\in \mathcal{E}_{(\xi,\epsilon)}$. Let $\Omega=\left\{(\xi,\epsilon)\in\mathbb{R}^{n-1}\times(0,\infty):|\xi|<1,\frac{1}{2}<\epsilon<1\right\}$. We define the function $w_{(\xi,\epsilon)}$ as the unique element of $\mathcal{E}_{(\xi,\epsilon)}$ satisfying
\begin{equation*}
  \begin{split}
  \int_{\mathbb{R}^n_+}&\left(\langle \nabla w_{(\xi,\epsilon)},\nabla \varphi\rangle-n(n+2)u_{(\xi,\epsilon)}^\frac{4}{n-2}w_{(\xi,\epsilon)}\varphi\right)+n T_c\int_{\partial \mathbb{R}_+^n}u^\frac{2}{n-2}_{(\xi,\epsilon)}w_{(\xi,\epsilon)}\varphi\\
=&-\int_{\mathbb{R}^n_+}\mu \lambda^{2d}f(\lambda^{-2}|x'|^2H_{ij}(x)\partial_i\partial_ju_{(\xi,\epsilon)}\varphi
\end{split}
\end{equation*}
  for all $\varphi\in\mathcal{E}_{(\xi,\epsilon)}$. Then we can establish the following: For any $(\xi,\epsilon)\in\lambda\Omega$, there holds
\begin{equation*}
  \begin{split}
 &\left|\mathcal{F}_g(\xi,\epsilon)-\frac{1}{2}\int_{B_\rho^+(0)}\sum_{a,b,c=1}^nh_{ac}h_{bc}\partial_au_{(\xi,\epsilon)}\partial_bu_{(\xi,\epsilon)}+\frac{c_n}{4}\int_{B_\rho^+(0)}\sum_{a,b,c=1}^n(\partial_ch_{ab})^2u_{(\xi,\epsilon)}^2\right.\\
 &\left.-\mu\lambda^{2d}\int_{\mathbb{R}^n_+}f(\lambda^{-2}|x'|^2)w_{(\xi,\epsilon)}\sum_{a,b=1}^nH_{ab}(x)\partial_a\partial_bu_{(\xi,\epsilon)}\right|\\
&\leq C\mu^\frac{2(n-1)}{n-2}\lambda^\frac{(4d+4)(n-1)}{n-2}+C\mu\lambda^{2d+2+\frac{n-2}{2}}\rho^\frac{2-n}{2}+C\lambda^{n-2}\rho^{2-n}.
\end{split}
\end{equation*}

\textbf{Step 4.} It follows from Proposition \ref{Chen_prop5.9} that $(0,1)$ is a strict local minimum point of $\mathcal{F}(\xi,\epsilon)$. Hence, we can find an open set $\Omega'\subset \Omega$ such that $(0,1)\in \Omega'$ and $\mathcal{F}(0,1)<\inf_{(\xi,\epsilon)\in \partial \Omega'}\mathcal{F}(\xi,\epsilon)<0$. By Step 3 with $d=1$, we have
\begin{equation*}
  \begin{split}
    &\left|\mathcal{F}_g(\lambda\xi,\lambda\epsilon)-\lambda^{8}\mu^2\mathcal{F}(\xi,\epsilon)\right|\\
    &\leq C\mu^\frac{2(n-1)}{n-2}\lambda^\frac{8(n-1)}{n-2}+C\mu\lambda^\frac{n+6}{2}\rho^\frac{2-n}{2}+C\lambda^{n-2}\rho^{2-n}
  \end{split}
\end{equation*}
for all $(\xi,\epsilon)\in\Omega$, equivalently,
\begin{equation*}
  \begin{split}
    &\left|\lambda^{-8}\mu^{-2}\mathcal{F}_g(\lambda\xi,\lambda\epsilon)-\mathcal{F}(\xi,\epsilon)\right|\\
    &\leq C\mu^\frac{2}{n-2}\lambda^\frac{8}{n-2}+C\mu^{-1}\lambda^\frac{n-10}{2}\rho^\frac{2-n}{2}+C\mu^{-2}\lambda^{n-10}\rho^{2-n}
  \end{split}
\end{equation*}
for all $(\xi,\epsilon)\in \Omega$. If $\mu^{-2}\lambda^{n-10}\rho^{2-n}$ is sufficiently small, then we have
$$\mathcal{F}_g(0,\lambda)<\int_{(\xi,\epsilon)\in\partial \Omega'}\mathcal{F}_g(\lambda\xi,\lambda\epsilon)<0.$$
Consequently, there exists $(\overline{\xi},\overline{\epsilon})\in\Omega'$ such that 
$$\mathcal{F}_g(\lambda\overline{\xi},\lambda\overline{\epsilon})=\inf_{(\xi,\epsilon)\in\Omega'}\mathcal{F}_g(\lambda \xi,\lambda \epsilon)<0.$$
It follows from Step 2 that that the function $v=v_{(\lambda\overline{\xi},\lambda\overline{\epsilon})}$ obtained in Step 1 is a positive smooth solution to \eqref{ChenWu_eq6.1}. By definition of $\mathcal{F}_g$ we have
\begin{equation*}
  \begin{split}
    2(n-2)&\int_{\mathbb{R}^n_+}v^\frac{2n}{n-2}-\frac{n-2}{n-1}T_c\int_{\partial \mathbb{R}^n_+}v^\frac{2(n-1)}{n-2}\\
    =&\mathcal{F}_g(\lambda\overline{\xi},\lambda\overline{\epsilon})+2(n-2)\int_{\mathbb{R}^n_+}u^\frac{2n}{n-2}_{(\lambda\overline{\xi},\lambda\overline{\epsilon})}-\frac{n-2}{n-1}T_c\int_{\partial \mathbb{R}^n_+}u^\frac{2(n-1)}{n-2}_{(\lambda\overline{\xi},\lambda\overline{\epsilon})},
  \end{split}
\end{equation*} 
whence
\begin{equation*}
  \begin{split}
    2(n-2)\int_{\mathbb{R}^n_+}v^\frac{2n}{n-2}&-\frac{n-2}{n-1}T_c\int_{\partial \mathbb{R}^n_+}v^\frac{2(n-1)}{n-2}\\
    &<2(n-2)\int_{\mathbb{R}^n_+}u^\frac{2n}{n-2}_{(0,1)}-\frac{n-2}{n-1}T_c\int_{\partial \mathbb{R}^n_+}u^\frac{2(n-1)}{n-2}_{(0,1)}.
  \end{split}
\end{equation*}
Using \eqref{ChenWu_eq3.11}, we estimate
$$\|v-u_{(\lambda\overline{\xi},\lambda\overline{\epsilon})}\|_{L^\frac{2n}{n-2}(B_\lambda^+(0))}\leq\|v-u_{(\lambda\overline{\xi},\lambda\overline{\epsilon})}\|_{L^\frac{2n}{n-2}(\partial \mathbb{R}^n_+)}\leq C\alpha.$$
Then
$$|B_\lambda^+(0)|^\frac{n-2}{2n}\sup_{B_\lambda^+(0)}v\geq\|v\|_{L^\frac{2n}{n-2}(B_\lambda^+(0))}\geq-C\alpha+\|u_{(\lambda\overline{\xi},\lambda\overline{\epsilon})}\|_{L^\frac{2(n-1)}{n-2}(B_\lambda^+(0))}.$$
Hence, if $\alpha$ is sufficiently small, then we obtain
$$\sup_{B_\lambda^+(0)}v\geq C\lambda^\frac{2-n}{2}.$$
This completes the proof.
\end{proof}

We remark that in the original statement of Proposition 6.1
in \cite{Chen&Wu}, it is assumed that $f(s)=-s+a_0$
and $n_0=62$.
However, one can see from the proof of Proposition 6.1 in \cite{Chen&Wu}
that Proposition \ref{Chen_prop6.1} is still true.

With Proposition \ref{Chen_prop6.1},
one can follow the proof of Theorem
6.2 in \cite{Chen&Wu} to prove the following:

\begin{theorem}[Theorem 6.2 in \cite{Chen&Wu}]\label{Chen_thm6.2}
For $T_c\in\mathbb{R}_-$ and $n\geq n_0$, there exists
a smooth Riemannian metric $g$ in $\overline{\mathbb{R}^n_+}$
with the following properties:\\
(i) $g_{ab}(x)=\delta_{ab}$ for $x\in \overline{\mathbb{R}^n_+}\setminus B_{1/2}^+(0)$,\\
(ii) $g$ is not locally conformally flat,\\
(iii) $\partial\mathbb{R}^n_+$ is totally geodesic with respect to $g$,\\
(iv) there exists a sequence of positive smooth functions
$\{v_\nu\}_{\nu\in\mathbb{N}}$ satisfying
\begin{equation*}
\left\{
  \begin{array}{ll}
    \displaystyle\Delta_g v_\nu-\frac{n-1}{4(n-2)}R_g v_\nu+n(n-2)v_\nu^{\frac{n+2}{n-2}}=0, & \hbox{in $\mathbb{R}^n_+$;} \\
    \displaystyle\frac{\partial v_\nu}{\partial x_n}=(n-2)T_c v_\nu^{\frac{n}{n-2}}, & \hbox{on $\partial\mathbb{R}^n_+$}
  \end{array}
\right.
\end{equation*}
for all $\nu$. Moreover, there hold
\begin{equation*}
\begin{split}
&2(n-2)\int_{\mathbb{R}^n_+}v_\nu^{\frac{2n}{n-2}}-\frac{n-2}{n-1}T_c\int_{\partial\mathbb{R}^n_+}v_\nu^{\frac{2(n-1)}{n-2}}\\
&<2(n-2)\int_{\mathbb{R}^n_+}u_{(0,1)}^{\frac{2n}{n-2}}-\frac{n-2}{n-1}T_c\int_{\partial\mathbb{R}^n_+}u_{(0,1)}^{\frac{2(n-1)}{n-2}}.
\end{split}
\end{equation*}
for all $\nu$, i.e.
$I_{(\mathbb{R}^n_+,|dx|^2)}[v_\nu]<S_c$,
and $\sup_{B_1(0)^+} v_\nu\to\infty$ as $\nu\to\infty$.
\end{theorem}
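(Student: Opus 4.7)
The idea is to construct $g$ on $\overline{\mathbb{R}^n_+}$ as a superposition of spatially separated perturbations, each a translated copy of the one to which Proposition \ref{Chen_prop6.1} applies, and to extract the sequence $\{v_\nu\}$ by applying that proposition to each piece in turn. Since Proposition \ref{Chen_prop6.1} furnishes only one blow-up solution per admissible triple $(\mu,\lambda,\rho)$, to obtain an entire sequence on a \emph{fixed} metric one should place distinct triples at pairwise disjoint base points rather than all at the origin.

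Concretely, I would pick base points $y_\nu\in\partial\mathbb{R}^n_+\cap B_{1/4}(0)$ together with sequences $\mu_\nu,\lambda_\nu,\rho_\nu\downarrow 0$ so that the half-balls $B_{\rho_\nu}^+(y_\nu)$ are pairwise disjoint and contained in $B_{1/2}^+(0)$, the pointwise bound $|h|+|\partial h|+|\partial^2 h|\leq\alpha$ holds uniformly with $\alpha$ as small as required, and the quantity $\mu_\nu^{-2}\lambda_\nu^{n-10}\rho_\nu^{2-n}$ is uniformly as small as required by Proposition \ref{Chen_prop6.1}. Set
\begin{equation*}
h_{ij}(x)=\sum_{\nu\geq 1}\mu_\nu\lambda_\nu^{2}\,f\!\left(\lambda_\nu^{-2}|x'-y_\nu'|^2\right)H_{ij}(x-y_\nu)\,\chi_\nu(x),\qquad h_{na}\equiv 0,
\end{equation*}
where $\chi_\nu$ is a smooth cutoff supported in $B_{\rho_\nu}^+(y_\nu)$, and $g=\exp(h)$. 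Disjointness of supports yields (i); the condition $h_{na}\equiv 0$ on each piece keeps $\partial\mathbb{R}^n_+$ totally geodesic, giving (iii); and the algebraic hypothesis $\sum_{i,j,k,l}(\overline{W}_{ikjl}+\overline{W}_{iljk})^2>0$ prevents the Weyl tensor of $g$ from vanishing identically inside any $B_{\rho_\nu}^+(y_\nu)$, giving (ii).

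For each $\nu$, the restriction of $g$ to $B_{\rho_\nu}^+(y_\nu)$ is, after translation by $-y_\nu$, precisely the single-piece metric of Proposition \ref{Chen_prop6.1} associated with the parameters $(\mu_\nu,\lambda_\nu,\rho_\nu)$, while $g$ is Euclidean on the complement of the supports. Applying Proposition \ref{Chen_prop6.1} to the $\nu$-th piece then produces a positive smooth solution $v_\nu$ of \eqref{ChenWu_eq6.1} satisfying $\sup v_\nu\geq C\lambda_\nu^{(2-n)/2}\to\infty$ together with the required strict energy inequality. The main technical subtlety — and the reason for arranging spatial rather than merely scale separation — is to guarantee that the different pieces do not interfere in the Lyapunov--Schmidt reduction underlying Proposition \ref{Chen_prop6.1}: because the bubble $u_{(y_\nu,\epsilon_\nu)}$ relevant to $v_\nu$ is supported essentially inside $B_{\rho_\nu}^+(y_\nu)$, on which $g$ coincides with its single-piece counterpart, the finite-dimensional reduction is identical to the one carried out in the proof of Proposition \ref{Chen_prop6.1}, and no new estimates are needed. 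The fact that $\sup_{B_1(0)^+}v_\nu\to\infty$ follows since $y_\nu\in B_1(0)^+$ and the concentration is at $y_\nu$.
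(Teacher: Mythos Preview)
The paper gives no proof of its own here; it simply defers to Chen--Wu's original argument. Your architecture --- superposing disjointly supported copies of the perturbation at a sequence of boundary points and invoking Proposition~\ref{Chen_prop6.1} once per copy --- is exactly the standard construction going back to Brendle in the closed case and adapted by Chen--Wu, so the overall plan is correct.

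There is one step, however, that you handle too loosely. Proposition~\ref{Chen_prop6.1} is stated for a metric that is \emph{exactly} Euclidean outside $B_\rho^+(0)$, whereas the full metric $g=\exp\bigl(\sum_\mu h_\mu\bigr)$ is \emph{not} Euclidean outside $B_{\rho_\nu}^+(y_\nu)$: the other pieces $h_\mu$, $\mu\neq\nu$, are sitting there. Consequently you cannot literally ``apply Proposition~\ref{Chen_prop6.1} to the $\nu$-th piece'' and obtain a solution of the equation with respect to the \emph{full} $g$; a black-box application yields only a solution for the single-piece metric $\exp(h_\nu)$. Your assertion that ``the finite-dimensional reduction is identical'' is not right either: the bubble $u_{(\xi,\epsilon)}$ is not compactly supported but only decays like $|x|^{2-n}$, so the foreign perturbations \emph{do} enter both the equation and the reduced functional $\mathcal{F}_g$. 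What one must actually do (and what Chen--Wu in fact check) is re-run the expansion of Step~3 in the sketch of Proposition~\ref{Chen_prop6.1} for the full $g$ and show that the extra cross terms coming from $h_\mu$ with $\mu\neq\nu$ are of strictly lower order than the leading term $\mu_\nu^2\lambda_\nu^{4d+4}$; this is easily arranged once the parameters $(\mu_\mu,\lambda_\mu,\rho_\mu)$ are chosen to decay fast enough and the centers $y_\mu$ are placed with enough separation. With that additional estimate in hand, the critical-point argument of Step~4 goes through for $\mathcal{F}_g$ near $(y_\nu,\lambda_\nu)$ exactly as before, and the remainder of your outline --- properties (i)--(iii), the energy inequality, and the blow-up $\sup_{B_1^+(0)}v_\nu\to\infty$ --- follows as you describe.
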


Now Theorem \ref{thm1.1} is a direct consequence of Theorem \ref{Chen_thm6.2}
by taking $n_0=62$.

\section{Proof of Theorem \ref{main}}\label{section3}

In order to lower the dimension further, we are going to choose
$f(s)$ to be a polynomial of degree $6$. In addition, we will establish 
the corresponding version of Proposition \ref{Chen_prop5.9}.
More precisely, we prove in this section the following:

\begin{prop}\label{main_prop}
Let $n\geq 35$. There exists a polynomial
$f(s)=\displaystyle\sum_{i=0}^6 a_i s^i$ such that
$I(1)>0$, $I'(1)=0$, $I''(1)<0$ and $J(1)<0$, provided that $T_c=-\displaystyle\frac{c}{n-2}$ is sufficiently close to $0$.
This implies that $\mathcal{F}(\xi,\epsilon)$
has a strict local minimum at the point $(0,1)$.
\end{prop}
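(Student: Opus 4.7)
The plan is to establish the proposition by first verifying the four conditions in the simpler limit $T_c=0$ and then extending to nearby $T_c$ by a continuity argument. At $T_c=0$, the coefficients in \eqref{5.4} reduce to the explicit Beta integrals
\[
c_q(0)=\tfrac{1}{2}B\!\left(\tfrac{1}{2},\tfrac{n-6-2q}{2}\right),
\]
which converge for all $0\leq q\leq 2d=12$ provided $n-6-2q>0$; this holds comfortably throughout $n\geq 35$, and it is also compatible with the restriction $d<(n-6)/4$ already imposed in Section \ref{section2}. Since $I(s)$ and $J(s)$ are polynomials whose coefficients depend continuously on $T_c$ through the $c_q$, the three strict inequalities $I(1)>0$, $I''(1)<0$, $J(1)<0$ persist on a neighborhood of $T_c=0$; the single equality $I'(1)=0$ can be maintained as $T_c$ varies by using the implicit function theorem to adjust one of the $a_i$, provided the corresponding partial derivative of $I'(1)$ is nonzero at the base point.

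The bulk of the work is to exhibit, at $T_c=0$ and for each $n\geq 35$, a polynomial $f(s)=\sum_{i=0}^6 a_is^i$ for which all four conditions hold. By \eqref{5.3} and \eqref{5.7}, each $\alpha_q$ and $\beta_q$ is a concrete quadratic form in $(a_0,\ldots,a_6)$; combined with \eqref{5.6} and \eqref{5.8}, the quantities $I(1)$, $I'(1)$, $I''(1)$, $J(1)$ become quadratic forms in $(a_0,\ldots,a_6)$ whose $n$-dependence is carried entirely by the rational factors $c_q(0)\prod_{j=0}^{q}\frac{n-1+2j}{n-5-2j}$ and $c_q(0)\prod_{j=0}^{q}\frac{n+3+2j}{n-5-2j}$. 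With seven free parameters constrained by only four scalar conditions, the expected solution set is three-dimensional, providing substantially more flexibility than the one-dimensional freedom available in Chen and Wu's degree-one ansatz, where the analogous argument required $n\geq 62$.

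The main obstacle is computational: one must exhibit admissible coefficients explicitly. My approach is to impose a restrictive ansatz, for instance normalizing the leading coefficient to $a_6=-1$ in analogy with Chen and Wu's choice $a_1=-1$, fixing most intermediate $a_i$ to carefully chosen values, and then using the linear equation $I'(1)=0$ to determine $a_0$. The remaining three sign conditions, after clearing the positive denominators $\prod_{j=0}^{q}(n-5-2j)$, become polynomial inequalities in $n$ whose validity on $[35,\infty)$ can be verified by examining their leading asymptotics together with a finite number of small-$n$ values. The critical threshold $n\geq 35$ is expected to emerge as the smallest integer at which these three polynomial inequalities are simultaneously satisfied under some workable ansatz. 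Once the base case $T_c=0$ is established, the continuity argument above yields the full statement, and the conclusion that $\mathcal{F}(\xi,\epsilon)$ has a strict local minimum at $(0,1)$ follows from exactly the same reduction invoked by Chen and Wu preceding Proposition \ref{Chen_prop5.9}, so no further work is needed for that step.
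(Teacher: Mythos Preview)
Your overall strategy---fix $a_1,\ldots,a_6$, solve $I'(1)=0$ for $a_0$, verify the three strict inequalities at $T_c=0$, then extend by continuity---matches the paper's. There is, however, one concrete error: the equation $I'(1)=0$ is \emph{quadratic} in $a_0$, not linear, because $\alpha_0=(n+1)a_0^2$ contributes the term $\frac{2(n-1)(n+1)}{n-5}c_0\,a_0^2$ to $I'(1)$. The paper writes $I'(1)=A_na_0^2+B_na_0+C_n$ explicitly and, for the specific numerical choice $a_1=-10$, $a_2=10^{-4}$, $a_3=-10^{-3}$, $a_4=0.184$, $a_5=-0.0265$, $a_6=7.37\times10^{-4}$, checks that the discriminant $B_n^2-4A_nC_n$ is strictly positive at $T_c=0$. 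Positivity of the discriminant is then the fourth strict inequality that persists for nearby $T_c$, and $a_0$ is taken to be the larger root; this replaces your implicit-function-theorem step and is slightly cleaner since all four conditions become open.

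A second difference concerns the range of $n$. Rather than establishing polynomial inequalities valid on all of $[35,\infty)$ via asymptotics plus a finite check, the paper verifies only $35\leq n\leq 62$ numerically with \texttt{Mathematica}, relying on Proposition~\ref{Chen_prop5.9} (Chen--Wu's degree-one polynomial, which is a special case of a degree-six one) to cover $n\geq 62$. This sidesteps the need to analyze how the root $a_0=a_0(n)$ behaves as $n\to\infty$, which your plan would require and which is not obviously straightforward since $a_0$ is given by a quadratic formula with $n$-dependent coefficients.
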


With Proposition \ref{main_prop},
Theorem \ref{main} now follows from Proposition \ref{Chen_prop6.1} and Theorem \ref{Chen_thm6.2}
with $n_0=35$. Thus it boils down to proving Proposition \ref{main_prop}.

Now we choose $d=6$ such that
$f(s)=a_0+a_1s+a_2s^2+a_3s^3+a_4s^4+a_5s^5+a_6s^6$,
where $a_1, a_3,a_5$ are negative constants and  $a_2,a_4,a_6$ are positive constants to be chosen later.
It follows from (\ref{5.3}) that
\begin{equation}\label{a}
  \begin{split}
    &\alpha_0=(n+1)a_0^2,\\
    &\alpha_1=2(n+3)a_0a_1,\\
    &\alpha_2=(n+7)a_1^2+2(n+5)a_0a_2,\\
    &\alpha_3=2(n+11)a_1a_2+2(n+7)a_0a_3,\\
    &\alpha_4=(n+17)a_2^2+2(n+15)a_1a_3+2(n+9)a_0a_4,\\
    &\alpha_5=2(n+23)a_2a_3+2(n+19)a_1a_4+2(n+11)a_0a_5,\\
    &\alpha_6=(n+31)a_3^2+2(n+29)a_2a_4+2(n+23)a_1a_5+2(n+13)a_0a_6,\\
    &\alpha_7=2(n+39)a_3a_4+2(n+35)a_2a_5+2(n+27)a_1a_6,\\
    &\alpha_8=(n+49)a_4^2+2(n+47)a_3a_5+2(n+41)a_2a_6,\\
    &\alpha_9=2(n+59)a_4a_5+2(n+55)a_3a_6,\\
    &\alpha_{10}=(n+71)a_5^2+2(n+69)a_4a_6,\\
    &\alpha_{11}=2(n+83)a_5a_6,\\
    &\alpha_{12}=(n+97)a_6^2.
  \end{split}
\end{equation}
Differentiating (\ref{5.6}) with respect to $s$
yields
\begin{equation}\label{iprime}
\begin{split}
I'(s)&=\sum_{q=0}^{12}\left(c_q\alpha_q(q+2)s^{q+1}\prod_{j=0}^q\frac{n-1+2j}{n-5-2j}\right).
\end{split}
\end{equation}
Combining this with  (\ref{a}), one can see that $I'(1)$ can be considered as a quadratic polynomial in $a_0$. More precisely, $p_n(a_0):=I'(1)=A_na_0^2+B_na_0+C_n$ where
\begin{equation}\label{anbncn}
  \begin{split}
    &A_n=\frac{2(n-1)(n+1)}{n-5}c_0,\\
    &B_n=2(n-1)\sum_{l=1}^6\left[(l+2)a_lc_l\prod_{j=0}^l\frac{n+1+2j}{n-5-2j}\right],\\
    &C_n=4(n+7)a_1^2c_2\prod_{0}^2\frac{n-1+2j}{n-5-2j}+10(n+11)a_1a_2c_3\prod_{j=0}^3\frac{n-1+2j}{n-5-2j}\\
    &\hspace{8mm}+6\left[(n+17)a_2^2+2(n+15)a_1a_3\right]c_4\prod_{j=0}^4\frac{n-1+2j}{n-5-2j}\\
    &\hspace{8mm}+14\left[(n+23)a_2a_3+(n+19)a_1a_4\right]c_5\prod_{j=0}^5\frac{n-1+2j}{n-5-2j}\\
    &\hspace{8mm}+8\left[(n+31)a_3^2+2(n+29)a_2a_4+2(n+23)a_1a_5\right]c_6\prod_{j=0}^6\frac{n-1+2j}{n-5-2j}\\
    &\hspace{8mm}+18\left[(n+39)a_3a_4+(n+35)a_2a_5+(n+27)a_1a_6\right]c_7\prod_{j=0}^7\frac{n-1+2j}{n-5-2j}\\
    &\hspace{8mm}+10\left[(n+49)a_4^2+2(n+47)a_3a_5+2(n+41)a_2a_6\right]c_8\prod_{j=0}^8\frac{n-1+2j}{n-5-2j}\\
    &\hspace{8mm}+22\left[(n+59)a_4a_5+(n+55)a_3a_6\right]c_9\prod_{j=0}^9\frac{n-1+2j}{n-5-2j}\\
    &\hspace{8mm}+12\left[(n+71)a_5^2+2(n+69)a_4a_6\right]c_{10}\prod_{j=0}^{10}\frac{n-1+2j}{n-5-2j}\\
    &\hspace{8mm}+26(n+83)a_5a_6c_{11}\prod_{j=0}^{11}\frac{n-1+2j}{n-5-2j}+14(n+97)a_6^2c_{12}\prod_{j=0}^{12}\frac{n-1+2j}{n-5-2j}.
  \end{split}
\end{equation}
Note that it follows from \eqref{5.4} that  $c_q$ is a continuous function of $T_c$, i.e. $c_q=c_q(T_c)$.
Hence, $A_n=A_n(T_c)$, $B_n=B_n(T_c)$, and $C_n=C_n(T_c)$ can also be regarded as continuous functions of $T_c$. Let $discrim(p_n)(T_c):=B_n(T_c)^2-4A_n(T_c)C_n(T_c)$,
which is the discriminant of $I'(1)$, the quadratic polynomial  in $a_0$.
 We will show that $discrim(p_n)(0)>0$. By continuity, this implies that $discrim(p_n)(T_c)>0$ when $T_c=-\displaystyle\frac{c}{n-2}$ is sufficiently close to $0$. When $T_c=0$,
the explicit value of $c_q(0)$ can be computed by using \eqref{5.4}:
\begin{equation}\label{cq0}
  c_q(0)=\frac{\sqrt{\pi}\Gamma(\frac{n-6-2q}{2})}{2\Gamma(\frac{n-5-2q}{2})},
\end{equation}
where $\Gamma(s)$ denotes the gamma function.

Now we choose  $a_1=-10$, $a_2=10^{-4}$,  $a_3=- 10^{-3}$,  $a_4=1.84\times 10^{-1}$,   $a_5=-2.65\times 10^{-2}$, and  $a_6=7.37\times 10^{-4}$.
By \eqref{anbncn} and \eqref{cq0}, a direct computation with the help of \texttt{Mathematica} shows that $discrim(p_n)(0)>0$ for $35\leq n\leq 62$ (c.f. \cite{code}).
 This tells us that there exists  $a_0\in\mathbb{R}$ such that $I'(1)=0$,  when $T_c=-\displaystyle\frac{c}{n-2}$ is sufficiently close to $0$.

Differentiating (\ref{iprime}) with respect to $s$
yields
\begin{equation*}
  I''(s)=\sum_{q=0}^{12}\left(c_q\alpha_q(q+2)(q+1)s^{q}\prod_{j=0}^q\frac{n-1+2j}{n-5-2j}\right).
\end{equation*}
Combining this with  (\ref{a}), we have the following:
\begin{equation*}
  \begin{split}
    I''(1)&=\frac{2(n-1)(n+1)}{n-5}a_0^2c_0+2(n-1)a_0\sum_{l=1}^6\left[(l+1)(l+2)a_lc_l\prod_{j=0}^l\frac{n+1+2j}{n-5-2j}\right]\\
    &\hspace{4mm}+12(n+7)a_1^2c_2\prod_{j=0}^2\frac{n-1+2j}{n-5-2j}+40(n+11)a_1a_2c_3\prod_{j=0}^3\frac{n-1+2j}{n-5-2j}\\
    &\hspace{4mm}+30\left[(n+17)a_2^2+2(n+15)a_1a_3\right]c_4\prod_{j=0}^4\frac{n-1+2j}{n-5-2j}\\
    &\hspace{4mm}+84\left[(n+23)a_2a_3+(n+19)a_1a_4\right]c_5\prod_{j=0}^5\frac{n-1+2j}{n-5-2j}\\
    &\hspace{4mm}+56\left[(n+31)a_3^2+2(n+29)a_2a_4+2(n+23)a_1a_5\right]c_6\prod_{j=0}^6\frac{n-1+2j}{n-5-2j}\\
    &\hspace{4mm}+144\left[(n+39)a_3a_4+(n+35)a_2a_5+(n+27)a_1a_6\right]c_7\prod_{j=0}^7\frac{n-1+2j}{n-5-2j}\\
    &\hspace{4mm}+90\left[(n+49)a_4^2+2(n+47)a_3a_5+2(n+41)a_2a_6\right]c_8\prod_{j=0}^8\frac{n-1+2j}{n-5-2j}\\
    &\hspace{4mm}+220\left[(n+59)a_4a_5+(n+55)a_3a_6\right]c_9\prod_{j=0}^9\frac{n-1+2j}{n-5-2j}\\
    &\hspace{4mm}+132\left[(n+71)a_5^2+2(n+69)a_4a_6\right]c_{10}\prod_{j=0}^{10}\frac{n-1+2j}{n-5-2j}\\
    &\hspace{4mm}+312(n+83)a_5a_6c_{11}\prod_{j=0}^{11}\frac{n-1+2j}{n-5-2j}+182(n+97)a_6^2c_{12}\prod_{j=0}^{12}\frac{n-1+2j}{n-5-2j}.
  \end{split}
\end{equation*}
Note that $I''(1)=I''(1)(T_c)$ can also be regarded as a continuous function of $T_c$. We will show that $I''(1)(0)<0$, which implies that $I''(1)(T_c)<0$
when $T_c=-\displaystyle\frac{c}{n-2}$ is close to $0$. Let $a_0=a_0(n)$ be the largest root of the quadratic polynomial $p_n$ when $T_c=0$, i.e.
\begin{equation*}
  a_0(n)=\frac{-B_n(0)+\sqrt{B_n(0)^2-4A_n(0)C_n(0)}}{2A_n(0)}.
\end{equation*}
We remark that, unlike other $a_i$'s that have a fixed value, $a_0$ depends on $n$.
For example,  we have
\begin{equation*}
  a_0(35)=\frac{388694358052+\sqrt{31382012570285343694}}{21443906250}
\end{equation*}
when $n=35$, while
\begin{equation*}
  a_0(62)=\frac{46081869321940760+\sqrt{76861908977503143130771763741035}}{2750533632000000}
\end{equation*}
when $n=62$.
Then direct computations with the help of \texttt{Mathematica} show that $I''(1)(0)<0$ for $35\leq n\leq 62$ (c.f. \cite{code}),
which by continuity implies that $I''(1)(T_c)<0$ for $35\leq n\leq 62$
when $T_c$ is  sufficiently close to $0$.
Likewise, with the help of \texttt{Mathematica},
we can show that $I(1) > 0$  when $T_c=0$ (c.f. \cite{code}), which implies that $I(1) > 0$ when $T_c$ is  sufficiently close to $0$.

It remains  to show that $J(1) < 0$ when $T_c$ is  sufficiently close to $0$. It follows from \eqref{5.7} that
\begin{equation*}
  \begin{split}
    \beta_0=&2a_0a_1,\\
    \beta_1=&a_1+2a_1^2+4a_0a_2,\\
    \beta_2=&2a_2+6a_1a_2+6a_0a_3,\\
    \beta_3=&4a_2^2+3a_3+8a_1a_3+8a_0a_4,\\
    \beta_4=&10a_2a_3+4a_4+10a_1a_4+10a_0a_5,\\
    \beta_5=&6a_3^2+12a_2a_4+5a_5+12a_1a_5+12a_0a_6,\\
    \beta_6=&14a_3a_4+14a_2a_5+6a_6+14a_1a_6,\\
    \beta_7=&8a_4^2+16a_3a_5+16a_2a_6,\\
    \beta_8=&18a_4a_5+18a_3a_6,\\
    \beta_9=&10a_5^2+20a_4a_6,\\
    \beta_{10}=&20a_5a_6,\\
    \beta_{11}=&12a_6^2.
  \end{split}
\end{equation*}
Combining this with \eqref{5.8}, we have
\begin{equation*}
  \begin{split}
    J(1)&=2a_1a_0c_0\prod_{j=0}^0\frac{n+3+2j}{n-5-2j}+(a_1+2a_1^2+4a_2a_0)c_1\prod_{j=0}^1\frac{n+3+2j}{n-5-2j}\\
    &\hspace{4mm}+(2a_2+6a_1a_2+6a_3a_0)c_2\prod_{j=0}^2\frac{n+3+2j}{n-5-2j}\\
    &\hspace{4mm}+(4a_2^2+3a_3+8a_1a_3+8a_4a_0)c_3\prod_{j=0}^3\frac{n+3+2j}{n-5-2j}\\
    &\hspace{4mm}+(10a_2a_3+4a_4+10a_1a_4+10a_5a_0)c_4\prod_{j=0}^4\frac{n+3+2j}{n-5-2j}\\
    &\hspace{4mm}+(6a_3^2+12a_2a_4+5a_5+12a_1a_5+12a_6a_0)c_5\prod_{j=0}^5\frac{n+3+2j}{n-5-2j}\\
    &\hspace{4mm}+(14a_3a_4+14a_2a_5+6a_6+14a_1a_0)c_6\prod_{j=0}^6\frac{n+3+2j}{n-5-2j}\\
    &\hspace{4mm}+(8a_4^2+16a_3a_5+16a_2a_6)c_7\prod_{j=0}^7\frac{n+3+2j}{n-5-2j}\\
    &\hspace{4mm}+(18a_4a_5+18a_3a_6)c_8\prod_{j=0}^8\frac{n+3+2j}{n-5-2j}\\
    &\hspace{4mm}+(10a_5^2+20a_4a_6)c_9\prod_{j=0}^9\frac{n+3+2j}{n-5-2j}\\
    &\hspace{4mm}+22a_5a_6c_{10}\prod_{j=0}^{10}\frac{n+3+2j}{n-5-2j}+12a_6^2c_{11}\prod_{j=0}^{11}\frac{n+3+2j}{n-5-2j}.
  \end{split}
\end{equation*}
Viewing $J(1)$ as a continuous function of $T_c$, with our choice of $a_i$, we can show that $J(1)(0)<0$  for $35\leq n \leq 62$ with the help of \texttt{Mathematica}
(c.f. \cite{code}).
This implies that, by continuity, $J(1)<0$ when $T_c$ is sufficiently close to $0$.
This finishes the proof of Proposition \ref{main_prop}.

\section*{Acknowledgement}

The first author was supported by the National Science and Technology Council (NSTC), Taiwan, with grant Number: 112-2115-M-032-006-MY2.

\bibliographystyle{amsplain}

\end{document}